\documentclass[12pt]{article}
\usepackage[utf8]{inputenc}
\usepackage[T1]{fontenc}
\usepackage{lmodern}
\usepackage{microtype}
\usepackage[margin=1in]{geometry}
\usepackage{amsmath}
\usepackage{amssymb}
\usepackage{amsthm}
\usepackage{mathtools}
\usepackage[english]{babel}
\usepackage{fancyhdr}
\usepackage{titling}
\usepackage[colorlinks=true,allcolors=blue]{hyperref}

\allowdisplaybreaks[1]
\AtBeginDocument{
  \setlength{\abovedisplayskip}{0.75em plus 0.25em minus 0.25em}
  \setlength{\belowdisplayskip}{0.75em plus 0.25em minus 0.25em}
  \setlength{\belowdisplayshortskip}{0.5em plus 0.25em minus 0.25em}
}

\title{Hitchin's Conjecture for $\mathfrak{sl}_n$ in Odd Prime Degree}
\author{Boming Jia}
\date{}
\posttitle{\par\end{center}\vskip0.25em}
\hypersetup{
  pdftitle={Hitchin's Conjecture for sl\_n in Odd Prime Degree},
  pdfauthor={Boming Jia},
  pdfsubject={Hitchin's conjecture for sl\_n in odd prime degree}
}

\newtheorem{theorem}{Theorem}[section]
\newtheorem{proposition}[theorem]{Proposition}
\newtheorem{lemma}[theorem]{Lemma}
\newtheorem{corollary}[theorem]{Corollary}
\theoremstyle{definition}
\newtheorem{definition}[theorem]{Definition}
\newtheorem{example}[theorem]{Example}
\theoremstyle{remark}
\newtheorem{remark}[theorem]{Remark}

\DeclareMathOperator{\St}{St}
\DeclareMathOperator{\ad}{ad}
\DeclareMathOperator{\tr}{tr}
\DeclareMathOperator{\sgn}{sgn}
\DeclareMathOperator{\End}{End}
\newcommand{\C}{\mathbb C}
\newcommand{\Z}{\mathbb Z}
\newcommand{\Fp}{\mathbb F_p}

\begin{document}
\setlength{\droptitle}{-6.75em}
\maketitle
\vspace{-4em}

\begin{abstract}
We prove Hitchin's conjecture for $\mathfrak{sl}_n$ in every odd prime
degree. More precisely, let $p=2k+1$ be prime and let $n\geq k+1$.
Every nonzero primitive element
$\alpha\in\Lambda^p(\mathfrak{sl}_n^*)^{\mathfrak{sl}_n}$
is nonzero on the top exterior power of the unique $p$-dimensional
irreducible submodule of $\mathfrak{sl}_n$ for the adjoint action
of a principal $\mathfrak{sl}_2$-subalgebra.
The conjecture also holds for types
$B_\ell$ and $C_\ell$, with $\ell\geq2$, in every prime degree
$p=4r-1$ with $1\leq r\leq\ell$.
\end{abstract}

\section{Introduction.}\label{sec:introduction}

Let $\mathfrak g$ be a complex simple Lie algebra. We identify
$H^*(\mathfrak g,\C)$ with the algebra
$\Lambda(\mathfrak g^*)^{\mathfrak g}$ of invariant alternating forms.
An element $\alpha$ is \emph{primitive} if its coproduct
in the standard Hopf algebra structure is
$\alpha\otimes1+1\otimes\alpha$
\cite[Theorem 9.2 and Section 10]{Koszul1950}.
An $\mathfrak{sl}_2$-subalgebra is \emph{principal} if it contains a
regular nilpotent element of $\mathfrak g$. Hitchin conjectured that
every nonzero primitive $d$-form is nonzero on the top exterior power
of some $d$-dimensional irreducible submodule for the adjoint action
of a principal $\mathfrak{sl}_2$-subalgebra
\cite[Section 2.2, final remark]{Hitchin2011}.

Let $G$ be a connected complex simple group with Lie algebra
$\mathfrak g$, and let $\mathcal M$ be the smooth locus of the moduli
space of stable principal $G$-bundles on a smooth projective complex
curve $C$. A nonzero invariant primitive $d$-form defines a map
\[
 H^1(C,K_C^{1-d})\longrightarrow H^0(\mathcal M,\Lambda^dT_{\mathcal M}),
\]
where $K_C$ is the canonical bundle. Hitchin proposed using the
conjecture to prove injectivity when $C$ has genus greater than two
\cite[Sections 2.1--2.2]{Hitchin2011}.

Bushek and Kumar reduced the conjecture to the simply-laced types
\cite[Theorem 2.5]{BushekKumar2014}. Hitchin proved it in the largest
primitive degree $2n-1$ for $\mathfrak{sl}_n$
\cite[Section 4.3]{Hitchin2023}. We prove the conjecture for
$\mathfrak{sl}_n$ in every odd prime degree.

\begin{theorem}\label{thm:main}
Let $p=2k+1$ be an odd prime and let $n\geq k+1$.
Fix a principal $\mathfrak{sl}_2$-subalgebra of $\mathfrak{sl}_n$, and
let $W_k\subset\mathfrak{sl}_n$ be the unique $p$-dimensional
irreducible submodule for its adjoint action. For every nonzero
primitive element
$\alpha\in\Lambda^p(\mathfrak{sl}_n^*)^{\mathfrak{sl}_n}$, we have
\[
 \left.\alpha\right|_{\Lambda^pW_k}\neq0.
\]
\end{theorem}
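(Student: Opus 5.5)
We work with an explicit model of the principal $\mathfrak{sl}_2$ and of the primitive form. Identify $\mathfrak{sl}_n$ with traceless endomorphisms of $V=\mathrm{Sym}^{n-1}\C^2$, so that the principal $\mathfrak{sl}_2$ acts irreducibly on $V$. Then $\mathfrak{sl}_n\cong\bigoplus_{j=1}^{n-1}W_j$ with $\dim W_j=2j+1$. The condition $n\ge k+1$ guarantees that $W_k$ occurs, and it is unique. For $\mathfrak{sl}_n$ the primitive invariants in degree $2m-1$ are spanned by the transgressions of $\tr(X^m)$. Concretely, the primitive $p$-form is, up to a nonzero scalar,
\[
\alpha(X_1,\dots,X_p)=\sum_{\sigma\in S_p}\sgn(\sigma)\,\tr\bigl(X_{\sigma(1)}\cdots X_{\sigma(p)}\bigr),
\]
which is the degree-$p$ part of the primitive subspace. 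This subspace is one-dimensional because $\mathfrak{sl}_n$ has exactly one primitive generator in each odd degree $2m-1$ with $2\le m\le n$, and $p=2k+1\le 2n-1$. So it suffices to show $\alpha(e_0,\dots,e_{2k})\neq0$ for a weight basis $e_0,\dots,e_{2k}$ of $W_k$. A natural choice is $e_i=\ad(f)^i e_0$, where $e_0$ is the highest-weight vector, proportional to $E^k$ with $E$ the regular nilpotent.

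We then organize the computation by weights. Since $\alpha$ is $\mathfrak h$-invariant, only products whose total weight is zero contribute, and the weights $2k,2k-2,\dots,-2k$ already sum to zero. The key reduction is to write $\alpha|_{\Lambda^pW_k}$ as $c\cdot\omega$, where $\omega$ spans the one-dimensional space $(\Lambda^pW_k)^*$. We would compute $c$ as a sum over permutations of traces of products of the $e_i$, which are explicit matrices supported on fixed diagonals: $e_i$ lives on the $(k-i)$-th superdiagonal. The result is a signed sum of products of entries of these matrices. Using the $\mathfrak{sl}_2$ symmetry, this becomes a polynomial expression in $n$ — more precisely, a polynomial in $n$ of degree related to $k$, obtained by summing over the $n$ starting positions along the diagonal.

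The main obstacle is proving that this combinatorial sum is nonzero, and this is exactly where primality of $p$ enters. We would first try to reduce the integer constant $c(n)$ modulo $p$. The sum over $S_p$ carries a free action of the cyclic group $\Z/p$ by rotation: cyclicity of the trace together with $\sgn$ of a $p$-cycle being $+1$ for odd $p$ means each orbit contributes $p$ times one representative. Write $c=p\cdot c'$ and then study $c'$ modulo $p$ over $\Fp$. Alternatively, the dihedral reflection may pair terms so that $c'$ is congruent mod $p$ to a single dominant term, perhaps the term from the ordering $e_0,e_1,\dots,e_{2k}$. That term is a product of binomial coefficients $\binom{2k}{i}$ times a polynomial in $n$, and it would be shown nonzero in $\Fp$ using Lucas' theorem, or the fact that $\binom{p-1}{i}\equiv(-1)^i$.

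The dependence on $n$ must be handled separately, since $c(n)$ could vanish mod $p$ for some $n$. The plan is to show that $c(n)$ is a polynomial in $n$ whose leading behaviour or whose value mod $p$ is controlled for all $n\ge k+1$. If necessary, we would instead compare with Hitchin's case $n=k+1$ (degree $2n-1$) and use a branching argument $\mathfrak{sl}_{k+1}\subset\mathfrak{sl}_n$ compatible with the restriction of the trace form to an invariant block. This is the step where we expect genuine difficulty.
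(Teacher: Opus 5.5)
Your setup is sound and matches the paper's starting point. The primitive $p$-form is a nonzero multiple of $\tr\St_p$, and its restriction to the line $\Lambda^pW_k$ is determined by one number. Grouping cyclic rotations gives $\tr\St_p=p\,\omega_p$, so one must show that the integer $\omega_p(u_0,\ldots,u_{p-1})$ (your $c'$) is nonzero, and working modulo $p$ is the right instinct. But the proposal stops exactly where the proof has to begin. You give no mechanism for evaluating $c'$ modulo $p$ and, as you say yourself, no control of the dependence on $n$.

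The heuristics you list would not supply one:
\begin{itemize}
\item \emph{Reversal does not isolate a single ordering.} Write $Y^*$ for the adjoint with respect to an $\mathfrak{sl}_2$-invariant form on $V$, so $e^*=-e$ and $f^*=-f$. Then $u_i^*=(-1)^ku_i$ for all $i$, hence $\tr(u_{a_p}\cdots u_{a_1})=(-1)^k\tr(u_{a_1}\cdots u_{a_p})$. Reversing $p=2k+1$ letters also has sign $(-1)^k$. So reversal pairs each cyclic class with a \emph{different} class of the \emph{same} contribution: this gives a factor $2$, not cancellation.
\item \emph{A growth argument in $n$ cannot work.} The relevant polynomial can have real roots right next to the admissible range: the paper's $c_2(n)=96(44-5n^2)$ vanishes near $n\approx2.97$ and equals $-96$ at $n=3$.
\item \emph{There is no branching to Hitchin's case $n=k+1$ over $\C$.} The principal $\mathfrak{sl}_2$ acts irreducibly on $V$, so no invariant $(k+1)$-dimensional block exists. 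In any case, nonvanishing of an integer says nothing about its residue modulo $p$.
\end{itemize}

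Two ingredients are missing. First, isolate the $n$-dependence. The matrix $A=\St_{p-1}(u_0,\ldots,u_{p-2})$ commutes with $e$ and has $\ad h$-weight $2k$, so $A=c_k(n)e^k$ with $c_k(n)\in\Z$. Hence $\omega_p(u_0,\ldots,u_{p-1})=(-1)^kc_k(n)(k!)^2\binom{n+k}{p}$. The restriction you were reaching for exists only in characteristic $p$. In dimension $n+p$ one has $fv_n=np\,v_{n+1}\equiv0$, so $v_1,\ldots,v_n$ span an invariant subspace mod $p$ and $c_k(n+p)\equiv c_k(n)\pmod p$. This reduces to the window $k+1\le n\le k+p$, where $\binom{n+k}{p}\equiv1$.

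Second, and this is the heart of the argument, evaluate $\omega_p$ over $\Fp$ on the evaluation basis $X(a)=\exp(af)e^k\exp(-af)=\sum_ia^iu_i$, $a\in\Fp$, rather than on the weight basis. The change of basis is the Vandermonde matrix $(a^i)$, which is invertible mod $p$. Since $X(t+b)=\exp(bf)X(t)\exp(-bf)$, translation by $\Fp$ acts on cyclic orderings of $X(0),\ldots,X(p-1)$ preserving sign and trace. So every free orbit contributes $p$ times a trace, i.e.\ zero. Only the classes $[0,a,2a,\ldots,(p-1)a]$ survive. Their products collapse to $(e^k\exp(af))^p$, whose trace is, by Frobenius, $(\tr(e^k\exp(af)))^p=a^kk!$. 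With the signs $a^k$, the total is $-k!\neq0$, which forces $c_k(n)\equiv(-1)^{k(k+1)/2}\pmod p$. Without this evaluation, or an equivalent one, your proposal does not establish the theorem.
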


We make an explicit choice of basis of $W_k$ such that the primitive
form $\omega_p$ of Definition~\ref{def:alternating-trace} evaluates to
an integer coefficient $c_k(n)$ times an explicit nonzero factor.
In Lemma~\ref{lem:coefficient-congruence}, we prove the uniform congruence
$c_k(n)\equiv(-1)^{k(k+1)/2}\pmod p$ for every odd prime $p=2k+1$
and every $n\geq k+1$. Thus $c_k(n)$ is a nonzero integer, and
$\omega_p$ is nonzero on $\Lambda^pW_k$.

\vspace{1em}
\noindent\textbf{Acknowledgments.}
The author was supported by NSFC Grant No.~12225108 and the Shuimu Scholar
Program in Tsinghua University. The author used ChatGPT extensively during
the preparation of this work. He would like to thank the model GPT-5.6 Sol
for generating an earlier draft of this paper and collaborating with the
author throughout the revision process.

\setlength{\abovedisplayskip}{0.5em plus 0.25em minus 0.25em}
\setlength{\belowdisplayskip}{0.5em plus 0.25em minus 0.25em}
\setlength{\belowdisplayshortskip}{0.25em plus 0.25em minus 0.25em}

\section{Alternating products for a principal \texorpdfstring{$\mathfrak{sl}_2$}{sl2}.}\label{sec:symmetric}

Fix $n\geq2$. On $V_n=\C^n$ with basis $v_1,\ldots,v_n$, define
$e,f,h$ by
\begin{equation}\label{eq:basis-action}
 ev_j=v_{j-1},\qquad fv_j=j(n-j)v_{j+1},\qquad
 hv_j=(n+1-2j)v_j,
\end{equation}
where $v_0=v_{n+1}=0$. These matrices form a principal
$\mathfrak{sl}_2$-triple, since they satisfy the $\mathfrak{sl}_2$ relations
and $e$ is a single nilpotent Jordan block. For each $n$, we keep
this triple and basis throughout.

Write $(\ad f)(Y)=[f,Y]$. Conjugation by $\exp(tf)$ acts as
$\exp(t\ad f)$ and preserves products. By $[f,e]=-h$ and
$[f,h]=2f$, we have, for $1\leq k<n$,
\begin{equation}\label{eq:adjoint-orbit}
 \exp(t\ad f)(e^k)
 =\exp(tf)e^k\exp(-tf)=(e-th-t^2f)^k.
\end{equation}
\begin{definition}[Taylor coefficients]\label{def:taylor-coefficients}
For $1\leq k<n$, let
\begin{equation}\label{eq:basis}
 u_{i,k}=\frac{1}{i!}(\ad f)^i(e^k),\qquad 0\leq i\leq2k.
\end{equation}
These are the Taylor coefficients of \eqref{eq:adjoint-orbit}.
We denote their span in $\mathfrak{sl}_n$ by $W_k$.
\end{definition}

\begin{samepage}
\begin{example}[$\mathfrak{sl}_3$]\label{ex:sl3}
The coefficients of $\exp(t\ad f)(e^k)$ for $k=1$ and $k=2$ are,
respectively,
\begingroup
\setlength{\arraycolsep}{0.25em}
\[
\begin{bmatrix*}[r]
 0&1&0\\0&0&1\\0&0&0
\end{bmatrix*},
\begin{bmatrix*}[r]
 -2&0&0\\0&0&0\\0&0&2
\end{bmatrix*},
\begin{bmatrix*}[r]
 0&0&0\\-2&0&0\\0&-2&0
\end{bmatrix*}
\]
\[
\begin{bmatrix*}[r]
 0&0&1\\0&0&0\\0&0&0
\end{bmatrix*},
\begin{bmatrix*}[r]
 0&-2&0\\0&0&2\\0&0&0
\end{bmatrix*},
\begin{bmatrix*}[r]
 2&0&0\\0&-4&0\\0&0&2
\end{bmatrix*},
\begin{bmatrix*}[r]
 0&0&0\\4&0&0\\0&-4&0
\end{bmatrix*},
\begin{bmatrix*}[r]
 0&0&0\\0&0&0\\4&0&0
\end{bmatrix*}
\]
\endgroup
\end{example}
\end{samepage}

\begin{proposition}\label{prop:principal-module}
The matrices $u_{0,k},\ldots,u_{2k,k}$ have integer entries and form
a basis of the unique $(2k+1)$-dimensional irreducible submodule
$W_k\subset\mathfrak{sl}_n$ for the adjoint action of the fixed
principal triple.
\end{proposition}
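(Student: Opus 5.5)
We will prove the four parts of the statement in turn: integrality, that $e^k$ is a highest weight vector of weight $2k$, that the $u_{i,k}$ form a basis, and uniqueness of the submodule.

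\emph{Integrality.} We use the expansion \eqref{eq:adjoint-orbit}. The matrices $e,h,f$ have integer entries by \eqref{eq:basis-action}. Hence $(e-th-t^2f)^k$ is a polynomial in $t$ with integer matrix coefficients. Comparing with the Taylor series $\exp(t\ad f)(e^k)=\sum_i \frac{t^i}{i!}(\ad f)^i(e^k)$, the coefficient of $t^i$ is exactly $u_{i,k}$. The $1/i!$ therefore disappears, and every $u_{i,k}$ is an integer matrix. The same expansion shows $u_{i,k}=0$ for $i>2k$, since the polynomial has degree at most $2k$ in $t$.

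\emph{Highest weight vector.} Here $1\le k<n$, so $e^k\neq0$; indeed $e^k v_{k+1}=v_1$. Also $e^k$ is traceless, being strictly upper triangular, so it lies in $\mathfrak{sl}_n$. We have $[e,e^k]=0$, and $[h,e^k]=2k\,e^k$ because $[h,e]=2e$ and $\ad h$ is a derivation. Thus $e^k$ is a highest weight vector of weight $2k$ for the adjoint action.

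\emph{Basis.} By standard $\mathfrak{sl}_2$ theory, the vectors $(\ad f)^i(e^k)$ for $0\le i\le 2k$ are nonzero. They have distinct $h$-weights $2k-2i$, so they are linearly independent. Their span is the irreducible module of dimension $2k+1$ generated by $e^k$. Consequently $u_{0,k},\dots,u_{2k,k}$, which differ from these vectors by the nonzero scalars $1/i!$, form a basis of it.

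\emph{Uniqueness.} This is the only step needing more than formal manipulation. We decompose $\mathfrak{gl}_n=V_n\otimes V_n^*$ under the principal $\mathfrak{sl}_2$. Since $V_n$ is the irreducible module of highest weight $n-1$ and $V_n^*\cong V_n$, the Clebsch--Gordan rule gives
\[
 \mathfrak{gl}_n\cong\bigoplus_{j=0}^{n-1}V^{(2j)},
\]
where $V^{(2j)}$ has highest weight $2j$. Removing the trivial summand, spanned by the identity, leaves $\mathfrak{sl}_n\cong\bigoplus_{j=1}^{n-1}V^{(2j)}$. Each irreducible type therefore occurs with multiplicity one. Because $1\le k\le n-1$, the isotypic component of highest weight $2k$ is a single irreducible summand of dimension $2k+1$. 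Any irreducible submodule of dimension $2k+1$ lies in that isotypic component, so it equals the summand. This proves uniqueness, and the submodule is the span of the $u_{i,k}$, namely $W_k$.

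A direct check of the multiplicities is also available. The $\ad h$-weight of the elementary matrix $E_{ab}$ is $2(b-a)$. The weight $2m$ therefore has multiplicity $n-|m|$ in $\mathfrak{gl}_n$, and this again yields exactly one irreducible summand of each highest weight $2j$ with $0\le j\le n-1$.
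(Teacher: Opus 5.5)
Your proof is correct, but it handles the basis step and the uniqueness step differently from the paper. For the basis, the paper does not cite the general highest-weight lemma. It derives the explicit string relations \eqref{eq:string}, namely $[h,u_{i,k}]=2(k-i)u_{i,k}$, $[e,u_{i,k}]=(2k-i+1)u_{i-1,k}$ and $[f,u_{i,k}]=(i+1)u_{i+1,k}$. From these it reads off nonvanishing, independence, invariance and irreducibility.

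For uniqueness, the paper does not decompose $\End(V_n)\cong V_n\otimes V_n^*$ by Clebsch--Gordan. It shows directly that any $A\in\End(V_n)$ with $[e,A]=0$ and $[h,A]=2kA$ is a multiple of $e^k$. The weight condition forces $Av_n=ae^kv_n$. Commuting with $e$ then propagates this along the cyclic basis $v_n,ev_n,\ldots,e^{n-1}v_n$, giving $A=ae^k$.

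Your route gives more information: the full multiplicity-free decomposition $\mathfrak{sl}_n\cong\bigoplus_{j=1}^{n-1}V^{(2j)}$. The paper's route is elementary and self-contained. It is also stated for all of $\End(V_n)$, which is exactly the form invoked in Lemma~\ref{lem:coefficient} to identify $\St_{p-1}(u_0,\ldots,u_{p-2})$ as $c_k(n)e^k$. That lemma also reuses \eqref{eq:string} to compute $[e,A]$ and $[h,A]$.

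Your weight count can serve the same later purpose if you state it in that form. The weight $2m$ has multiplicity $n-|m|$ in $\End(V_n)$, so the kernel of $\ad e$ on the weight-$2k$ space has dimension $(n-k)-(n-k-1)=1$ and is spanned by $e^k$.
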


\begin{proof}
By \eqref{eq:adjoint-orbit}, every $u_{i,k}$ has integer entries, and
\[
 u_{2k,k}=(-1)^kf^k.
\]

By induction using the $\mathfrak{sl}_2$-relations, we have
\begin{equation}\label{eq:string}
 [h,u_{i,k}]=2(k-i)u_{i,k},\qquad
 [e,u_{i,k}]=(2k-i+1)u_{i-1,k},
\end{equation}
where $u_{-1,k}=0$. Since $u_{0,k}=e^k\neq0$, all the $u_{i,k}$ are
nonzero and have distinct $\ad h$-weights, so they form a basis of $W_k$.
Since $[f,u_{i,k}]=(i+1)u_{i+1,k}$, where $u_{2k+1,k}=0$,
the subspace $W_k$ is invariant under the principal triple.
Every nonzero submodule of $W_k$ contains an $\ad h$-eigenvector,
hence some $u_{i,k}$ and, by repeated commutation with $e$ and $f$,
all of them. Thus $W_k$ is irreducible.

For uniqueness, let $A\in\End(V_n)$ satisfy $[e,A]=0$ and
$[h,A]=2kA$. Since $hv_n=(1-n)v_n$, we have
\[
 h(Av_n)=[h,A]v_n+A(hv_n)=(1-n+2k)Av_n.
\]
By \eqref{eq:basis-action}, we have $Av_n\in\C v_{n-k}$.
Write $Av_n=a e^kv_n$ for some $a\in\C$. Since $[e,A]=0$, we have
\[
 Ae^rv_n=e^rAv_n=a e^{r+k}v_n\qquad(0\leq r<n).
\]
Since $v_n,ev_n,\ldots,e^{n-1}v_n$ form a basis, we have $A=ae^k$.
Thus $\C e^k$ is the unique highest weight line of weight $2k$.
Every irreducible submodule of dimension $2k+1$ is generated by
this line and equals $W_k$.
\end{proof}

Fix an odd integer $p=2k+1$ with $1\leq k<n$, and write $u_i=u_{i,k}$.

\begin{definition}[Alternating trace]\label{def:alternating-trace}
For $n\times n$ matrices $Y_1,\ldots,Y_r$ over a field, set
\[
 \St_r(Y_1,\ldots,Y_r)
 =\sum_{\sigma\in S_r}\sgn(\sigma)Y_{\sigma(1)}\cdots Y_{\sigma(r)}.
\]
We define the $p$-linear form $\omega_p$ by
\[
 \omega_p(Y_0,\ldots,Y_{p-1})
 =\tr\bigl(\St_{p-1}(Y_0,\ldots,Y_{p-2})Y_{p-1}\bigr).
\]
\end{definition}

\begin{proposition}\label{prop:primitive-form}
The form $\omega_p$ is alternating and invariant under simultaneous
conjugation over every field. Over $\C$, its restriction to
$\mathfrak{sl}_n$ spans the space of primitive forms of degree $p$.
\end{proposition}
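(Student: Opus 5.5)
We first prove that $\omega_p$ is alternating, then that it is invariant, and finally that it spans the primitive forms in degree $p$.

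\textbf{Alternating.} Since $\St_{p-1}$ is alternating in its $p-1$ arguments, $\omega_p$ is alternating in $Y_0,\ldots,Y_{p-2}$. It then suffices to show invariance under one transposition involving the last slot, for instance the cyclic shift. By cyclicity of the trace,
\[
 \omega_p(Y_0,\ldots,Y_{p-1})=\frac{1}{p}\sum_{\tau\in S_p}\sgn(\tau)\tr\bigl(Y_{\tau(0)}\cdots Y_{\tau(p-1)}\bigr),
\]
because the $p$ cyclic rotations of a word of length $p$ all have sign $+1$ when $p$ is odd. This identity alone only shows the form is alternating over fields where $p$ is invertible. For an integral argument valid over every field, we argue directly. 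Each term $\sgn(\sigma)\tr(Y_{\sigma(0)}\cdots Y_{\sigma(p-2)}Y_{p-1})$ is the unique representative of its cyclic class in which $Y_{p-1}$ sits last. The full sum over $S_p$ is therefore $p$ times our sum, with signs matching because $p$-cycles are even. Our sum is a sum over cyclic classes of $\sgn\cdot\tr$, and it is manifestly alternating under all of $S_p$, since $S_p$ permutes cyclic classes and cyclic rotation preserves both sign and trace. This argument works over $\Z$, hence over every field.

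\textbf{Invariance.} Invariance under simultaneous conjugation $Y_i\mapsto gY_ig^{-1}$ is immediate, since products and traces are conjugation invariant.

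\textbf{Primitivity.} Over $\C$, we use the classical description of primitive generators of $\Lambda(\mathfrak{sl}_n^*)^{\mathfrak{sl}_n}$ in terms of Koszul's transgression, or equivalently the Cartan/Chevalley result. The primitive subspace in degree $2m-1$ is one-dimensional for $2\le m\le n$ and zero otherwise. It is spanned by the form
\[
 (Y_1,\ldots,Y_{2m-1})\mapsto\sum_\sigma\sgn(\sigma)\tr\bigl(Y_{\sigma(1)}\cdots Y_{\sigma(2m-1)}\bigr),
\]
the image of the invariant polynomial $\tr(Y^m)$ under the Cartan map $P\mapsto P(\cdot,[\cdot,\cdot],\ldots,[\cdot,\cdot])$ up to a nonzero scalar. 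By the display above, $\omega_p$ equals $1/p$ times this form with $m=k+1$, so it lies in the primitive line.

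It remains to check that the restriction to $\mathfrak{sl}_n$ is nonzero, since $p\le 2n-1$ as $k<n$. This is the main obstacle: the textbook statement gives the span, but nonvanishing of this specific representative requires either citing the standard fact that these forms restrict to nonzero generators, or a direct check. For a direct check, the Cartan map from primitive invariant polynomials to primitive cohomology is an isomorphism. Moreover, $\tr(Y^{k+1})$ is nonzero modulo decomposables in $S(\mathfrak{sl}_n^*)^{\mathfrak{sl}_n}$ for $k+1\le n$, because power sums $p_2,\ldots,p_n$ generate the symmetric invariants of the Cartan subalgebra with trace zero. Hence the transgressed form is nonzero. Alternatively, nonvanishing will also follow a posteriori from the main computation.
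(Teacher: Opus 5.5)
Your proposal is correct, but it differs from the paper's proof in two places: how it reaches ``every field,'' and how much of the primitivity claim it derives rather than cites.

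\emph{Alternation.} The paper uses the same grouping identity $\tr\St_p=p\,\omega_p$ to get alternation over $\C$, and then transfers it to every field. Each alternation identity, such as the vanishing of $\omega_p$ on two equal generic matrix arguments, is a polynomial with integer coefficients in the matrix entries. It is identically zero because it vanishes at all complex points. So your remark that the identity only helps where $p$ is invertible is too pessimistic. Your direct argument over $\Z$ is therefore not needed, although it is valid and more self-contained.

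As written, however, your argument only gives antisymmetry, $\omega_p(Y_{\pi(0)},\ldots,Y_{\pi(p-1)})=\sgn(\pi)\,\omega_p(Y_0,\ldots,Y_{p-1})$. In characteristic $2$ you need one more line to get vanishing on repeated arguments. A transposition $\pi$ acts freely on the cyclic classes, because it is not conjugate to a nontrivial power of a $p$-cycle, all of whose cycles have odd length at least $3$. Hence when $Y_a=Y_b$, the classes $[\tau]$ and $[\pi\tau]$ cancel in pairs. Alternatively, $2\,\omega_p(\ldots,Y,\ldots,Y,\ldots)=0$ in the torsion-free polynomial ring over $\Z$, so the form itself vanishes there.

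\emph{Primitivity.} The paper simply cites Dynkin and Kostant for the fact that $\omega_p|_{\mathfrak{sl}_n}$ is a primitive generator. You instead reconstruct this from three ingredients: the Cartan-map image of $\tr(Y^{k+1})$, the theorem that this map identifies indecomposable invariant polynomials with primitive classes, and the fact that $\tr(Y^2),\ldots,\tr(Y^n)$ freely generate the invariants. This uses heavier machinery, but it shows exactly where $k<n$ enters.

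Your fallback ``a posteriori'' argument is not circular, since the main computation uses only multilinearity, alternation and conjugation invariance of $\omega_p$. However, it covers only prime $p$, whereas the proposition is stated for every odd $p=2k+1$ with $k<n$.
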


\begin{proof}
Since $p$ is odd, cyclic rotation preserves both sign and trace.
Grouping the terms in the full alternating sum into sets of $p$
cyclic rotations, we have
\[
 \tr\St_p(Y_0,\ldots,Y_{p-1})
 =p\,\omega_p(Y_0,\ldots,Y_{p-1}).
\]
Thus $\omega_p$ is alternating over $\C$, hence over every field
since its alternation identities are polynomial identities over $\Z$.
Since the trace is invariant under conjugation, $\omega_p$ is
invariant under simultaneous conjugation. Over $\C$, the restriction
of $\omega_p$ to $\mathfrak{sl}_n$ is a primitive generator
\cite[p.~141]{Dynkin1954}, \cite[Section 5.5, p.~261]{Kostant1958}.
\end{proof}

\begin{lemma}[The coefficient $c_k(n)$]\label{lem:coefficient}
There is a unique integer $c_k(n)$ such that
\begin{equation}\label{eq:coefficient-identity}
 \St_{p-1}(u_0,\ldots,u_{p-2})=c_k(n)e^k.
\end{equation}
\end{lemma}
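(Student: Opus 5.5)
The plan is to pin down $S=\St_{p-1}(u_0,\ldots,u_{p-2})$ through its $\ad h$-weight and its commutator with $e$, and then use the uniqueness argument already given in the proof of Proposition~\ref{prop:principal-module}.

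\emph{Weight.} By \eqref{eq:string}, $u_i$ has $\ad h$-weight $2(k-i)$. Since $\ad h$ is a derivation of the associative product, each monomial $u_{\sigma(0)}\cdots u_{\sigma(p-2)}$ has weight
\[
\sum_{i=0}^{2k-1}2(k-i)=2k.
\]
Hence $[h,S]=2kS$.

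\emph{Commutation with $e$.} I would show $[e,S]=0$. The map $\ad e$ is a derivation, and $\St_{p-1}$ is multilinear and alternating in its arguments, so
\[
[e,S]=\sum_{j}\St_{p-1}(u_0,\ldots,[e,u_j],\ldots,u_{p-2}).
\]
By \eqref{eq:string}, $[e,u_j]=(2k-j+1)u_{j-1}$, so:
\begin{itemize}
\item for $j\ge1$, the $j$-th term has $u_{j-1}$ in two slots and vanishes by alternation;
\item for $j=0$, the term vanishes because $[e,u_0]=[e,e^k]=0$.
\end{itemize}
Thus $[e,S]=0$.

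\emph{Identification of $S$.} The argument in the proof of Proposition~\ref{prop:principal-module} applies to any $A\in\End(V_n)$ with $[e,A]=0$ and $[h,A]=2kA$. It gives $A=ae^k$ for some $a\in\C$; it does not use tracelessness. Applying it to $S$ gives $S=c\,e^k$.

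\emph{Integrality and uniqueness.} Proposition~\ref{prop:principal-module} says the $u_i$ have integer entries, so $S$ has integer entries. The $(1,k+1)$ entry of $e^k$ equals $1$, because $e^kv_{k+1}=v_1$ and $k<n$. So $c$ equals an entry of $S$ and is an integer. Uniqueness holds since $e^k\neq0$.

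The only step needing care is the derivation computation: $\ad e$ must be applied to the associative product inside each monomial and the terms then regrouped into alternating sums. This is routine, since $\ad e$ acts as a derivation termwise and commutes with the signed permutation sum.
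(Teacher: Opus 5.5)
Your proposal is correct and follows the paper's proof essentially step for step. You get $[e,S]=0$ from the derivation and alternation argument and $[h,S]=2kS$ from weight counting, apply the highest-weight uniqueness argument of Proposition~\ref{prop:principal-module}, and read off integrality from an entry of $e^k$ equal to $1$; the only cosmetic difference is that you use the $(1,k+1)$ entry where the paper uses the $(n-k,n)$ entry.
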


\begin{proof}
Let
\[
 A=\St_{p-1}(u_0,\ldots,u_{p-2}).
\]
By the Leibniz rule for commutators and \eqref{eq:string}, we have
\[
 [e,A]=\sum_{i=0}^{p-2}
 \St_{p-1}(u_0,\ldots,[e,u_i],\ldots,u_{p-2})=0.
\]
Indeed, $[e,u_i]=(p-i)u_{i-1}$, so each summand has either the zero
argument $u_{-1}$ or two copies of $u_{i-1}$.

By the Leibniz rule for $\ad h$ on $\End(V_n)$, we have
\begin{align*}
 [h,A]
 &=\sum_{i=0}^{p-2}
   \St_{p-1}(u_0,\ldots,[h,u_i],\ldots,u_{p-2})\\
 &=2\left(\sum_{i=0}^{2k-1}(k-i)\right)A
 =2\bigl(2k^2-k(2k-1)\bigr)A=2kA.
\end{align*}
By the highest weight argument in Proposition~\ref{prop:principal-module},
we have $A=ae^k$ for some $a\in\C$.
The $(n-k,n)$-entry of $e^k$ is $1$, so $a$ is the corresponding
entry of $A$ and is an integer. Thus $c_k(n)=a$ is uniquely determined.
\end{proof}

\newpage
Let $c_k(n)$ be the integer in Lemma~\ref{lem:coefficient}.

\begin{example}\label{ex:coefficients}
\begin{align*}
 c_1(n)&=2,\\
 c_2(n)&=2^5\cdot3(44-5n^2),\\
 c_3(n)&=2^5\cdot3^6\cdot5\bigl(7(n^8-299n^6+18727n^4-420945n^2)+21701196\bigr).
\end{align*}
\end{example}

\begin{lemma}\label{lem:trace-periodicity}
For every $n\geq k+1$,
\begin{equation}\label{eq:trace-reduction}
 \omega_p(u_0,\ldots,u_{p-1})
 =(-1)^kc_k(n)(k!)^2\binom{n+k}{p}
\end{equation}
and
\begin{equation}\label{eq:periodicity}
 c_k(n+p)\equiv c_k(n)\pmod p.
\end{equation}
\end{lemma}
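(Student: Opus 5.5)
The plan is to prove \eqref{eq:trace-reduction} by a direct trace computation, and \eqref{eq:periodicity} by showing that $c_k(n)$ agrees, for all $n\geq k+1$, with a single polynomial in $\Z[n]$.

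\emph{Trace formula.} By Lemma~\ref{lem:coefficient} and Definition~\ref{def:alternating-trace}, $\omega_p(u_0,\ldots,u_{p-1})=\tr\bigl(c_k(n)e^k u_{2k}\bigr)$. The proof of Proposition~\ref{prop:principal-module} gives $u_{2k}=(-1)^kf^k$, so this equals $(-1)^kc_k(n)\tr(e^kf^k)$. By \eqref{eq:basis-action}, for $j\leq n-k$,
\[
 e^kf^kv_j=\prod_{m=j}^{j+k-1}m(n-m)\,v_j,
\]
and $e^kf^kv_j=0$ for $j>n-k$. Writing $\prod_{m=j}^{j+k-1}m=k!\binom{j+k-1}{k}$ and $\prod_{m=j}^{j+k-1}(n-m)=k!\binom{n-j}{k}$, we get
\[
 \tr(e^kf^k)=(k!)^2\sum_{j=1}^{n-k}\binom{j+k-1}{k}\binom{n-j}{k}.
\]
Set $a=j+k-1$, so that $n-j=N-a$ with $N=n+k-1$. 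The sum becomes $\sum_a\binom{a}{k}\binom{N-a}{k}$, and the Chu--Vandermonde identity evaluates it as $\binom{N+1}{2k+1}=\binom{n+k}{p}$. This gives \eqref{eq:trace-reduction}.

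\emph{Periodicity.} I will show that there is a polynomial $P\in\Z[x]$ with $c_k(n)=P(n)$ for all $n\geq k+1$; then $c_k(n+p)\equiv c_k(n)\pmod p$ follows at once. The difficulty is that in $V_n$ the truncation $ev_1=0$ is not polynomial in $n$, so some care is needed.

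To avoid the truncation, introduce the infinite-dimensional space $\widetilde V$ with basis $v_j$ for all integers $j\leq n$. Define $e,f,h$ on $\widetilde V$ by the same formulas \eqref{eq:basis-action}, now with $ev_j=v_{j-1}$ for every $j$. Since $fv_0=0\cdot n\,v_1=0$, the span of the $v_j$ with $j\leq0$ is stable under $e,f,h$, and the quotient is $V_n$. The $u_i$ are the $t^i$-coefficients of $(e-th-t^2f)^k$, so each is an integer noncommutative polynomial in $e,h,f$. Consequently $A=\St_{p-1}(u_0,\ldots,u_{p-2})$ is an integer noncommutative polynomial in $e,h,f$, independent of $n$.

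Now index basis vectors from the top by $s=n-j\geq0$. In this indexing, $e$ maps $v_{n-s}$ to $v_{n-s-1}$, $h$ acts on $v_{n-s}$ by the scalar $2s-n+1$, and $f$ maps $v_{n-s}$ to $(n-s)s\,v_{n-s+1}$. All these coefficients lie in $\Z[n]$. Hence the $v_{n-k}$-coefficient of $Av_n$ computed in $\widetilde V$ is a fixed polynomial $P(n)\in\Z[n]$: it is a finite sum over words of bounded length.

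Since $n-k\geq1$, projecting to the quotient $V_n$ is compatible with the action and preserves this coefficient. Therefore $P(n)$ equals the $(n-k,n)$-entry of $A$ acting on $V_n$, which is $c_k(n)$ by Lemma~\ref{lem:coefficient}. This gives \eqref{eq:periodicity}.

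The main obstacle is exactly this polynomiality step, namely ensuring that the boundary truncation at $j=1$ does not break it. The construction of $\widetilde V$, which relies on $f$ killing $v_0$, is what I would try first to handle it.
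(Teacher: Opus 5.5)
Your proof is correct. The trace formula is argued as in the paper; the paper proves the same Chu--Vandermonde identity by counting $p$-element subsets of $\{1,\ldots,n+k\}$ by their middle element.

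For the periodicity you take a genuinely different route.

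\textbf{The paper's route.} It works directly over $\Z/p\Z$. Inside $V_{n+p}$, the span of $v_1,\ldots,v_n$ is stable modulo $p$, because $fv_n=np\,v_{n+1}\equiv0$. The structure constants $j(n+p-j)$ and $n+p+1-2j$ also agree with their dimension-$n$ counterparts modulo $p$. So the $(1,k+1)$-entry of $\St_{p-1}(u_0,\ldots,u_{p-2})$ in dimension $n+p$ reduces to the one in dimension $n$.

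\textbf{Your route.} You realize $V_n$ as a quotient of $\widetilde V$, whose structure constants, indexed from the top, lie in $\Z[n]$. You then show that $c_k(n)=P(n)$ for a single $P\in\Z[x]$ on all $n\geq k+1$.

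\textbf{What each buys.} Your route costs an auxiliary infinite-dimensional module and the word-expansion step, but it proves more. It explains why the values in Example~\ref{ex:coefficients} are polynomials in $n$. It also gives $c_k(n+m)\equiv c_k(n)\pmod m$ for every positive integer $m$, not only $m=p$. The paper's argument is shorter, stays finite-dimensional, and gives exactly the congruence needed later.

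\textbf{A simpler variant.} You can avoid the quotient by indexing from the bottom instead. Take the module with basis $v_j$, $j\geq1$, and the same formulas. Then $V_n$ is a genuine submodule, since $fv_n=n(n-n)v_{n+1}=0$ automatically. All structure constants still lie in $\Z[n]$, and the $(1,k+1)$-entry is read off directly. This is the characteristic-zero analogue of the paper's embedding of $V_n$ into $V_{n+p}$.
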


\begin{proof}
By Lemma~\ref{lem:coefficient} and $u_{p-1}=(-1)^kf^k$, we have
\[
 \omega_p(u_0,\ldots,u_{p-1})
 =c_k(n)\tr(e^ku_{p-1})=(-1)^kc_k(n)\tr(e^kf^k).
\]
By \eqref{eq:basis-action}, the only nonzero diagonal entries of
$e^kf^k$ have indices $1\leq j\leq n-k$. Thus
\begin{equation}\label{eq:binomial-trace}
\begin{aligned}
 \tr(e^kf^k)
 &=\sum_{j=1}^{n-k}\prod_{r=0}^{k-1}(j+r)(n-j-r)\\
 &=(k!)^2\sum_{j=1}^{n-k}\binom{j+k-1}{k}\binom{n-j}{k}
 =(k!)^2\binom{n+k}{p}.
\end{aligned}
\end{equation}
For the last equality, we count $p$-element subsets of
$\{1,\ldots,n+k\}$ by their middle element $j+k$, choosing $k$
elements below it and $k$ above it. This proves
\eqref{eq:trace-reduction}.

For \eqref{eq:periodicity}, we work over $\Z/p\Z$. In dimension $n+p$,
the span of $v_1,\ldots,v_n$ is preserved by $e,f,h$, since
$fv_n=np\,v_{n+1}=0$. Their restrictions are the operators
\eqref{eq:basis-action} in dimension $n$. By taking coefficients in
\eqref{eq:adjoint-orbit}, we see that the $u_i$ restrict to their
$n$-dimensional counterparts as well. Taking the $(1,k+1)$-entry
of the restricted identity \eqref{eq:coefficient-identity}, we obtain
\[
 c_k(n+p)
 =\bigl(\St_{p-1}(u_0,\ldots,u_{p-2})\bigr)_{1,k+1}
 =c_k(n)\quad\text{in }\Z/p\Z.\qedhere
\]
\end{proof}

\begin{remark}\label{rem:composite-periodicity}
The periodicity in \eqref{eq:periodicity} holds for every odd integer
$p=2k+1$. In Section~\ref{sec:finite-field}, we prove that $c_k(n)$
has nonzero residue modulo $p$ whenever $p$ is prime.
\end{remark}

\section{Alternating traces modulo \texorpdfstring{$p$}{p}.}\label{sec:finite-field}

From now on, assume that $p$ is prime. By \eqref{eq:basis-action}, we have
\[
 \exp(tf)v_j=\sum_{r=0}^{n-j}r!\binom{j+r-1}{r}\binom{n-j}{r}
 t^rv_{j+r}.
\]
Thus $\exp(tf)$ has entries in $\Z[t]$. Since $e^kf^r$ has zero
diagonal for $r\neq k$, we have by \eqref{eq:binomial-trace}
\begin{equation}\label{eq:exponential-trace}
 \tr(e^k\exp(tf))=\frac{t^k}{k!}\tr(e^kf^k)
 =k!\binom{n+k}{p}t^k.
\end{equation}

We have $\exp(sf)\exp(tf)=\exp((s+t)f)$ over $\Z[s,t]$
and hence after reduction.

\begin{lemma}[Vandermonde identities]\label{lem:vandermonde}
In $\Fp$, we have
\[
 \det(a^i)_{0\leq a,i<p}=(-1)^{k(k+1)/2}k!.
\]
For every $a\in\Fp^\times$, the permutation $i\mapsto ai$ of $\Fp$
has sign $a^k$.
\end{lemma}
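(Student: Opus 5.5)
The plan is to treat both statements as explicit product computations in $\Fp$. We use the convention $0^0=1$, so that the row $a=0$ of the matrix $(a^i)$ is $(1,0,\ldots,0)$. We order the rows and columns by the representatives $0,1,\ldots,p-1$.

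\emph{The determinant.} By the Vandermonde formula, which is a polynomial identity over $\Z$ and so holds after reduction modulo $p$,
\[
 \det(a^i)_{0\leq a,i<p}=\prod_{0\leq a<b\leq p-1}(b-a).
\]
I would group the factors by the difference $d=b-a\in\{1,\ldots,p-1\}$. The value $d$ occurs for exactly $p-d$ pairs, so the product equals $\prod_{d=1}^{p-1}d^{\,p-d}$. Next I pair $d$ with $p-d$ for $1\leq d\leq k$; since $p$ is odd, no $d$ is paired with itself. In $\Fp$ we have $p-d=-d$, and Fermat's little theorem gives $d^p=d$, so
\[
 d^{\,p-d}(p-d)^{d}=(-1)^d d^{\,p}=(-1)^d d .
\]
Taking the product over $1\leq d\leq k$ gives $(-1)^{1+2+\cdots+k}\,k!=(-1)^{k(k+1)/2}k!$, as claimed.

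\emph{The sign of $i\mapsto ai$.} Let $\pi(i)=ai$, identified with a permutation of $\{0,\ldots,p-1\}$ through the representatives. For any injective map of these indices into $\Fp$, the product $\prod_{i<j}(x_j-x_i)$ changes by $\sgn(\pi)$ when the $x$'s are permuted by $\pi$. Applying this with $x_i=i$ gives
\[
 \sgn(\pi)\prod_{i<j}(j-i)=\prod_{i<j}(\pi(j)-\pi(i))=\prod_{i<j}(aj-ai)=a^{\binom p2}\prod_{i<j}(j-i).
\]
The Vandermonde product $\prod_{i<j}(j-i)$ is nonzero in $\Fp$, since it is a product of nonzero differences, so it cancels. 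Since $\binom p2=pk$, we get $a^{pk}=(a^p)^k=a^k$ by Fermat. Finally, $\sgn(\pi)=\pm1$ and $1\neq-1$ in $\Fp$ because $p$ is odd, so the identity $\sgn(\pi)=a^k$ in $\Fp$ determines the sign.

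I do not expect a serious obstacle. The only step needing care is the pairing $d\leftrightarrow p-d$, which turns the awkward exponents $p-d$ into the clean factor $(-1)^{k(k+1)/2}k!$. The other point to watch is making sure the convention $0^0=1$ matches the Vandermonde formula, which it does.
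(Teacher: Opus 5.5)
Your proof is correct and follows the paper's overall structure. The sign computation is the same as the paper's. The paper permutes the rows of $(i^j)$ by $i\mapsto ai$ and factors $a^j$ out of column $j$, which is exactly your comparison of $\prod_{i<j}(aj-ai)=a^{\binom p2}\prod_{i<j}(j-i)$ with the permuted Vandermonde product. You also spell out that $\pm1$ are distinct in $\Fp$, which the paper leaves implicit.

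For the determinant, both arguments use the Vandermonde formula and pair $r$ with $p-r$, but you evaluate $\prod_{0\le x<y<p}(y-x)$ differently. The paper groups the factors by the larger index to get $\prod_{r=1}^{p-1}r!$, then uses Wilson's theorem in the form $r!(p-r)!=(-1)^r r$. You group by the difference to get $\prod_{d=1}^{p-1}d^{\,p-d}$, which is the same integer, and use Fermat's $d^p=d$. Neither route gains much over the other. Yours has the mild economy that the whole lemma rests on Fermat alone. The paper's fits its later use of Wilson's identity $(-1)^k(k!)^2=-1$.
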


\begin{proof}
For $1\leq r\leq k$, we have by Wilson's identity
\[
 -1=(p-1)!=(p-r)!\prod_{j=1}^{r-1}(p-j)
 =(-1)^{r-1}(p-r)!(r-1)!.
\]
Thus $r!(p-r)!=(-1)^r r$. By the Vandermonde formula, we have
\[
 \det(a^i)_{0\leq a,i<p}
 =\prod_{0\leq x<y<p}(y-x)
 =\prod_{r=1}^{p-1}r!
 =\prod_{r=1}^{k}r!(p-r)!
 =(-1)^{k(k+1)/2}k!.
\]

We permute the rows of $(i^j)_{0\leq i,j<p}$ by $i\mapsto ai$
and factor $a^j$ from column $j$. Since $a^p=a$, we obtain
\[
 \sgn(i\mapsto ai)=\prod_{j=0}^{p-1}a^j
 =a^{p(p-1)/2}=a^k.\qedhere
\]
\end{proof}

\begin{lemma}\label{lem:coefficient-congruence}
Let $p=2k+1$ be an odd prime. For every $n\geq k+1$,
\begin{equation}\label{eq:coefficient-congruence}
 c_k(n)\equiv(-1)^{k(k+1)/2}\pmod p,
\end{equation}
where $c_k(n)$ is defined as in Lemma~\ref{lem:coefficient}.
\end{lemma}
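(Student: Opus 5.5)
The plan is to work over $\Fp$ throughout and replace the Taylor basis $u_0,\ldots,u_{p-1}$ by the values of the orbit map at the points of $\Fp$. For $a\in\Fp$ put $Y_a=\exp(a\ad f)(e^k)=\exp(af)e^k\exp(-af)$, which makes sense mod $p$ because $\exp(tf)$ has entries in $\Z[t]$. Since $\exp(t\ad f)(e^k)=\sum_{i=0}^{p-1}t^iu_i$ is a polynomial of degree $p-1$, we have $Y_a=\sum_i a^iu_i$. Lemma~\ref{lem:vandermonde} shows that the change of basis $(a^i)$ has determinant $(-1)^{k(k+1)/2}k!$. By Lemma~\ref{lem:trace-periodicity} it suffices to treat one representative $n$ of each residue class mod $p$ with $n\geq k+1$, and all reductions below are compatible with the restriction argument used there.

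\textbf{Step 1: reduce to one omitted point.} Consider $Z=\St_{p-1}(Y_1,\ldots,Y_{p-1})$, obtained by omitting $Y_0=e^k$. By multilinearity and alternation, $Z=\sum_j m_j\,\St_{p-1}(u_0,\ldots,\widehat{u_j},\ldots,u_{p-1})$, where $m_j$ is the $(p-1)$-minor of $(a^i)_{a\in\Fp^\times}$ obtained by deleting column $j$. The term $j=p-1$ is the only one of $\ad h$-weight $2k$, by the weight computation of Lemma~\ref{lem:coefficient}. Its coefficient is $m_{p-1}=\det(a^i)_{a\in\Fp^\times,\,i<p-1}$, which equals $\det(a^i)_{a,i<p}/\prod_{a\neq0}a$. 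By Wilson this is $-(-1)^{k(k+1)/2}k!$, and it multiplies $c_k(n)e^k$. So
\[
 c_k(n)\cdot\bigl(-(-1)^{k(k+1)/2}k!\bigr)=\bigl(Z\bigr)_{\text{weight }2k,\ (n-k,n)\text{-entry}}.
\]
Reading off the weight-$2k$ part of $Z$ amounts to taking its $(n-k,n)$-entry modulo lower-weight contributions. Concretely, I would take $\tr(Z\,\exp(tf))$ and extract the coefficient of $t^k$, using \eqref{eq:exponential-trace}. Alternatively I would take the $(1,k+1)$-entry directly, since only the weight-$2k$ component contributes there.

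\textbf{Step 2: compute $Z$ using the group structure.} Each product $Y_{a_1}\cdots Y_{a_{p-1}}$ equals $\exp(a_1f)\,e^k\exp((a_2-a_1)f)\,e^k\cdots e^k\exp(-a_{p-1}f)$, because $\exp(sf)\exp(tf)=\exp((s+t)f)$ holds mod $p$. I would exploit the symmetries of $\Fp$. The dilations $a\mapsto\lambda a$ are realized by the diagonal torus, and they permute the $Y_a$ with sign $\lambda^k$ by Lemma~\ref{lem:vandermonde}. The translations $a\mapsto a+s$ are realized by $\exp(s\ad f)$. Together these collapse the alternating sum over orderings of $\Fp^\times$ into a sum over orderings up to the affine group. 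Repeated use of $\sum_{a\in\Fp}a^m=-[\,p-1\mid m,\ m>0\,]$ then kills all but the terms whose exponent sums are divisible by $p-1$.

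The target is that the relevant entry of $Z$ equals $-k!$ times a unit that the dilation sign forces to be $1$. That would give $c_k(n)\equiv(-1)^{k(k+1)/2}$, matching the factor already produced by the Vandermonde determinant in Lemma~\ref{lem:vandermonde}.

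\textbf{Main obstacle.} The hard step is Step 2, the actual evaluation of the weight-$2k$ entry of $Z$ mod $p$, uniformly in $n$. Its difficulty is that $\binom{n+k}{p}$ may vanish mod $p$, so one cannot simply pass through the trace identity \eqref{eq:trace-reduction} unless $n\equiv k+1,\ldots$ is chosen suitably.

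I would first try the representative $n\equiv k+1\pmod p$, taking $n=k+1$ itself, where $\binom{2k+1}{p}=1$. In that case $\omega_p$ of the $Y_a$ can be compared with the full alternating trace $\tr\St_p(Y_0,\ldots,Y_{p-1})=p\,\omega_p$, computed in the $(k+1)$-dimensional module. For the other residue classes, I would try to show the $(1,k+1)$-entry computation in Step 2 is independent of $n$ mod $p$. The idea is to note that only the entries of $\exp(tf)$ in the top-left $(k+1)\times(k+1)$ corner enter the $(1,k+1)$-entry of a product containing $p-1\geq k$ factors $e^k$. Those entries, $r!\binom{j+r-1}{r}\binom{n-j}{r}$, would be handled as polynomials in $n$. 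If this independence can be established, the general case reduces to $n=k+1$.
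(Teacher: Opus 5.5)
\textbf{There is a genuine gap: Step 2 is not carried out, and the tools you name for it do not act on $Z$.}

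Your Step 1 is correct bookkeeping. The only weight-$2k$ summand of $Z$ is $m_{p-1}c_k(n)e^k$, and $m_{p-1}=-(-1)^{k(k+1)/2}k!$. So the lemma is equivalent to $Z_{1,k+1}=-k!$. But that is only a restatement, and the difficulties with Step 2 are as follows.
\begin{itemize}
\item A translation $a\mapsto a+s$ does not preserve $\Fp^\times$. So $\exp(s\ad f)$ sends $Z$ to the standard polynomial of the orbit points with $Y_s$ omitted, not $Y_0$. In any case the $(1,k+1)$-entry is not invariant under conjugation by $\exp(sf)$.
\item The dilations only reproduce the grading. Conjugating by the diagonal torus gives $\lambda^{-k}Z_{1,k+1}=\lambda^{k}Z_{1,k+1}$, which is vacuous because $\lambda^{2k}=1$.
\end{itemize}
So there is nothing left to drive the power-sum cancellations. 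The fallback fails too.
\begin{itemize}
\item Consider the $(1,k+1)$-entry of $\exp(a_1f)e^k\exp((a_2-a_1)f)e^k\cdots$. Each $e^k$ raises the index by $k$, and the intervening factors $\exp(bf)$ need not lower it. So intermediate indices up to $\min(n,2k^2+1)$ occur, and entries of $\exp(tf)$ outside the top-left $(k+1)\times(k+1)$ corner do enter.
\item Over $\Fp$ the identity $\tr\St_p=p\,\omega_p$ just reads $0=0$.
\end{itemize}

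\textbf{The obstacle you identified is not real.} After the periodicity reduction to $k+1\le n\le k+p$ one has $p\le n+k\le 2p-1$. Hence $\binom{n+k}{p}\equiv1\pmod p$ for every $n$ in this window, not only for $n=k+1$. Then \eqref{eq:trace-reduction} and Wilson's identity give $\omega_p(u_0,\ldots,u_{p-1})=-c_k(n)$ uniformly.

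\textbf{The missing idea: pair $Z$ with the omitted point by the trace instead of extracting an entry.} One has
$\tr(ZY_0)=\omega_p(Y_1,\ldots,Y_{p-1},Y_0)=\omega_p(Y_0,\ldots,Y_{p-1})$, since a cyclic shift of $p$ arguments is even. By Lemma~\ref{lem:vandermonde} this equals $\det(a^i)\,\omega_p(u_0,\ldots,u_{p-1})=(-1)^{k(k+1)/2+1}k!\,c_k(n)$. This quantity is a trace over all of $\Fp$, so translations genuinely act.
\begin{itemize}
\item Write it as a sum over cyclic classes of orderings of $\Fp$.
\item Translation preserves the sign, since a nonzero translation is a $p$-cycle and hence even. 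It preserves the trace, since it is simultaneous conjugation by $\exp(bf)$.
\item Free orbits therefore contribute $p$ times a trace, which vanishes. Only the classes $[0,a,2a,\ldots,(p-1)a]$ with $a\in\Fp^\times$ survive.
\item Each surviving class contributes sign $a^k$ times $\tr\bigl((e^k\exp(af))^p\bigr)=\bigl(\tr(e^k\exp(af))\bigr)^p=a^k k!$.
\item Summing, $\sum_{a\neq0}a^{2k}k!=-k!$, which gives $c_k(n)\equiv(-1)^{k(k+1)/2}$.
\end{itemize}
This is the paper's argument.
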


\begin{proof}
By \eqref{eq:periodicity}, it suffices to consider $k+1\leq n\leq k+p$.
We work over the finite field $\Fp$ and retain the same notation
for the reductions of matrices and polynomials.
Since $0\leq n-k-1<p$, we have
\[
 \binom{n+k}{p}
 =\prod_{j=1}^{n-k-1}\frac{p+j}{j}=1.
\]
All denominators are nonzero, and the empty product is $1$.
By \eqref{eq:trace-reduction} and Wilson's identity
$(-1)^k(k!)^2=-1$, we have
\[
 \omega_p(u_0,\ldots,u_{p-1})=-c_k(n).
\]

Let
\[
 X(t)=\exp(tf)e^k\exp(-tf)=\sum_{i=0}^{p-1}t^iu_i.
\]
By multilinearity and alternation of $\omega_p$, and then
Lemma~\ref{lem:vandermonde}, we have
\begin{equation}\label{eq:finite-field-evaluation}
\begin{aligned}
 \omega_p(X(0),\ldots,X(p-1))
 &=\left(\sum_{\sigma\in S_p}\sgn(\sigma)
 \prod_{a=0}^{p-1}a^{\sigma(a)}\right)
 \omega_p(u_0,\ldots,u_{p-1})\\
 &=\det(a^i)_{0\leq a,i<p}\,
 \omega_p(u_0,\ldots,u_{p-1})
 =(-1)^{k(k+1)/2+1}k!c_k(n).
\end{aligned}
\end{equation}

\begin{samepage}
We consider all permutations of the list
\[
 (0,1,2,\ldots,p-1).
\]
Two listings are equivalent if one is a cyclic rotation of the other.
We denote the class of $(a_0,\ldots,a_{p-1})$ by
$[a_0,\ldots,a_{p-1}]$. Each class has a unique representative with
$p-1$ last. We therefore index the sum defining
$\omega_p(X(0),\ldots,X(p-1))$ by these classes.
Since $p$ is odd, cyclic
rotation preserves the sign of $i\mapsto a_i$ and the trace
$\tr(X(a_0)\cdots X(a_{p-1}))$. Both are therefore well-defined on classes.

The additive group $\Fp$ acts on these classes by
\[
 b\cdot[a_0,\ldots,a_{p-1}]=[a_0+b,\ldots,a_{p-1}+b].
\]
Translation commutes with cyclic rotation, so this action is well-defined.
The orbit of $[a_0,\ldots,a_{p-1}]$ is
\[
 \bigl\{[a_0+b,\ldots,a_{p-1}+b]\mid b\in\Fp\bigr\}.
\]
By the orbit--stabilizer formula, its cardinality divides $p$,
so it is $1$ or $p$.
\end{samepage}

Sign and trace are constant on each orbit. Indeed, every nonzero
translation is an even $p$-cycle, and
\[
 X(t+b)=\exp(bf)X(t)\exp(-bf).
\]
If the orbit of $[a_0,\ldots,a_{p-1}]$ has size $p$, its $p$
translated classes are distinct.
After factoring out their common sign, its contribution is
\[
 \sum_{b\in\Fp}\tr\!\left(\prod_{i=0}^{p-1}X(a_i+b)\right)
 =p\,\tr\!\left(\prod_{i=0}^{p-1}X(a_i)\right)=0.
\]
Thus only classes fixed by every translation contribute.

For a fixed class, choose its representative beginning with $0,a$.
Since translation by $b$ fixes the class, $b+a$ follows $b$.
Thus every fixed class has the form $[0,a,\ldots,(p-1)a]$ with
$a\in\Fp^\times$.
Conversely, translation rotates each such representative and thus fixes
its class.

We have
\begin{align*}
 \tr\bigl(X(0)X(a)\cdots X((p-1)a)\bigr)
 &=\tr\!\left(\prod_{j=0}^{p-1}
   \bigl(\exp(jaf)e^k\exp(-jaf)\bigr)\right)\\
 &=\tr\bigl((e^k\exp(af))^{p-1}e^k\exp(-(p-1)af)\bigr)\\
 &=\tr\bigl((e^k\exp(af))^p\bigr)
 =\bigl(\tr(e^k\exp(af))\bigr)^p\\
 &=\tr(e^k\exp(af))=a^k k!.
\end{align*}
We use \eqref{eq:exponential-trace} in the last equality.

By Lemma~\ref{lem:vandermonde}, the class $[0,a,\ldots,(p-1)a]$ has
sign $a^k$. Since $a^{2k}=1$, we obtain
\[
 \omega_p(X(0),\ldots,X(p-1))
 =\sum_{a\in\Fp^\times}a^{2k}k!=-k!.
\]
Comparing with \eqref{eq:finite-field-evaluation} and cancelling $k!$,
we obtain $c_k(n)=(-1)^{k(k+1)/2}$.
\end{proof}

\begin{proof}[Proof of Theorem~\ref{thm:main}]
We return to the matrices over $\C$ from Section~\ref{sec:symmetric}.
By Lemma~\ref{lem:coefficient-congruence}, the integer $c_k(n)$ is nonzero.
Since $n+k\geq p$, every factor on the right of
\eqref{eq:trace-reduction} is nonzero.
Thus $\omega_p(u_0,\ldots,u_{p-1})\neq0$.
By Proposition~\ref{prop:primitive-form}, the form $\alpha$ is a nonzero
scalar multiple of $\omega_p$, so $\alpha(u_0,\ldots,u_{p-1})\neq0$.

Now let $\mathfrak s\subset\mathfrak{sl}_n$ be any principal
$\mathfrak{sl}_2$-subalgebra. By Kostant's conjugacy theorem
\cite[Corollary 3.7]{Kostant1959}, there exists $g\in\mathrm{SL}_n(\C)$
such that $\mathfrak s=g\langle e,h,f\rangle g^{-1}$.
Conjugation by $g$ intertwines the adjoint actions of these two
subalgebras. By Proposition~\ref{prop:principal-module}, the subspace
$gW_kg^{-1}$ is therefore the unique $p$-dimensional irreducible
submodule for $\mathfrak s$, with basis
$gu_0g^{-1},\ldots,gu_{p-1}g^{-1}$.
Since $\mathrm{SL}_n(\C)$ is connected, the form $\alpha$ is invariant
under its adjoint action. Hence
\[
 \alpha(gu_0g^{-1},\ldots,gu_{p-1}g^{-1})
 =\alpha(u_0,\ldots,u_{p-1})\neq0.\qedhere
\]
\end{proof}

\begin{samepage}
\begin{corollary}\label{cor:BC}
Hitchin's conjecture holds for complex simple Lie algebras of type
$B_\ell$ or $C_\ell$, with $\ell\geq2$, in every prime degree
$p=4r-1$ with $1\leq r\leq\ell$.
\end{corollary}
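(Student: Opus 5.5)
We will deduce Corollary~\ref{cor:BC} from Theorem~\ref{thm:main} by restricting the $\mathfrak{sl}_N$ form to the classical subalgebra. Type $B_\ell$ is $\mathfrak{so}_{2\ell+1}\subset\mathfrak{sl}_{2\ell+1}$ and type $C_\ell$ is $\mathfrak{sp}_{2\ell}\subset\mathfrak{sl}_{2\ell}$; in both cases set $N=2\ell+1$ or $N=2\ell$. We use the standard fact that a principal $\mathfrak{sl}_2$-subalgebra of $\mathfrak{so}_{2\ell+1}$ or $\mathfrak{sp}_{2\ell}$ is principal in $\mathfrak{sl}_N$, because the regular nilpotent of $B_\ell$ or $C_\ell$ is a single Jordan block in the defining representation.

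First we check that the $(4r-1)$-dimensional irreducible submodule of $\mathfrak{sl}_N$ lies in the smaller algebra. As an $\mathfrak{sl}_2$-module, $\mathfrak{sl}_N=\bigoplus_{j=1}^{N-1}W_j$ with $\dim W_j=2j+1$. The subalgebra $\mathfrak g$ decomposes with exponents $1,3,\ldots,2\ell-1$, so $\mathfrak g=\bigoplus_{j\text{ odd}\leq 2\ell-1}W_j$ (each $W_j$ occurs once in $\mathfrak{sl}_N$, so this is forced). For $p=4r-1=2k+1$ we have $k=2r-1$, which is odd and at most $2\ell-1$. Hence $W_k\subset\mathfrak g$, and $W_k$ is the unique $p$-dimensional irreducible submodule of $\mathfrak g$. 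Moreover $N\geq 2\ell\geq k+1$, so Theorem~\ref{thm:main} applies to $\mathfrak{sl}_N$ with this $k$.

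Next we show that the restriction $\omega_p|_{\mathfrak g}$ spans the primitive forms of degree $p$ on $\mathfrak g$. The primitive degrees of $B_\ell$ and $C_\ell$ are $4s-1$ for $1\leq s\leq\ell$, each occurring once, so the primitive space in degree $p$ is one-dimensional. Restriction $\Lambda(\mathfrak{sl}_N^*)^{\mathfrak{sl}_N}\to\Lambda(\mathfrak g^*)^{\mathfrak g}$ is a map of Hopf algebras, since it is induced by the Lie algebra map $\mathfrak g\to\mathfrak{sl}_N$. It therefore sends primitives to primitives, so $\omega_p|_{\mathfrak g}$ is primitive. It is also nonzero, because Theorem~\ref{thm:main} gives $\omega_p(u_0,\ldots,u_{p-1})\neq0$ with all $u_i\in W_k\subset\mathfrak g$. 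So $\omega_p|_{\mathfrak g}$ spans the primitive line, and every nonzero primitive $\alpha$ is a nonzero multiple of it. Therefore $\alpha|_{\Lambda^pW_k}\neq0$.

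Finally, conjugacy of principal $\mathfrak{sl}_2$-subalgebras in $\mathfrak g$ (Kostant) together with invariance of $\alpha$ under the connected adjoint group extends the conclusion to every principal subalgebra, exactly as in the proof of Theorem~\ref{thm:main}.

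The main obstacle is the bookkeeping in the first step. We must identify the principal triple of $\mathfrak g$ with the fixed triple of Section~\ref{sec:symmetric}, up to conjugacy in $\mathrm{SL}_N$, and justify that $W_k\subset\mathfrak g$ exactly when $k$ is odd. To do this we would choose the bilinear form $\langle v_i,v_j\rangle=\pm\delta_{i+j,N+1}$ (with suitable weights), check that $e$, $f$, $h$ are skew for it, and then note that $\mathfrak g$ is an $\mathfrak{sl}_2$-submodule with the stated exponents.
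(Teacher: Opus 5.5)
Your proof is correct. Its skeleton matches the paper's: the standard inclusion $\mathfrak g\subset\mathfrak{sl}_N$, principal stays principal, $W_k\subset\mathfrak g$, apply Theorem~\ref{thm:main}, then show every primitive $p$-form on $\mathfrak g$ is a multiple of $\omega_p|_{\mathfrak g}$. Two of these steps are done differently.

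\textbf{Showing $W_k\subset\mathfrak g$.} You use Kostant's decomposition of $\mathfrak g$ under a principal $\mathfrak{sl}_2$ (exponents $1,3,\ldots,2\ell-1$) together with the multiplicity-freeness of $\mathfrak{sl}_N=\bigoplus_j W_j$. The paper instead notes that $e$ is skew-adjoint for the defining form, so $e^k$ is skew-adjoint when $k$ is odd. It then uses that $\mathfrak g$ is stable under $\ad f$ because $f\in\mathfrak g$. This is elementary, needs no knowledge of exponents, and is exactly what your proposed bilinear-form check would produce.

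\textbf{Spanning the primitive line.} The paper cites the Bushek--Kumar theorem that restriction on primitive classes is surjective. You use three facts instead: restriction is a Hopf map, so $\omega_p|_{\mathfrak g}$ is primitive; it is nonzero by Theorem~\ref{thm:main} because $W_k\subset\mathfrak g$; and the primitive space of $B_\ell$ and $C_\ell$ in degree $4r-1$ is one-dimensional. This replaces a cited surjectivity theorem by the standard list of primitive degrees. It also recovers surjectivity in degree $p$ as a consequence of the nonvanishing.

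\textbf{Unneeded steps.} Your final paragraph and the ``main obstacle'' paragraph can be dropped. Theorem~\ref{thm:main} is stated for an arbitrary principal $\mathfrak{sl}_2$-subalgebra of $\mathfrak{sl}_N$. Your exponent argument applies to any principal triple of $\mathfrak g$. So you can start from an arbitrary principal subalgebra of $\mathfrak g$, which is principal in $\mathfrak{sl}_N$. You then need neither Kostant conjugacy inside $\mathfrak g$ nor any identification with the fixed triple of Section~\ref{sec:symmetric}.
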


\begin{proof}
Let $\mathfrak g\subset\mathfrak{sl}_N$ be the standard inclusion, where
$N=2\ell+1$ in type $B_\ell$ and $N=2\ell$ in type $C_\ell$.
A principal $\mathfrak{sl}_2$ of $\mathfrak g$ remains principal in
$\mathfrak{sl}_N$ \cite[proof of Theorem 2.5]{BushekKumar2014}.
We choose a basis in which its triple is \eqref{eq:basis-action}
and put $k=2r-1$, so $p=2k+1$.
The algebra $\mathfrak g$ consists of operators
skew-adjoint for its defining symmetric or alternating bilinear form.
Since $k$ is odd, $e^k$ is again skew-adjoint and lies in
$\mathfrak g$. Its iterated commutators with $f$ also lie in
$\mathfrak g$, so $W_k\subset\mathfrak g$.

Since $N\geq2r=k+1$, Theorem~\ref{thm:main} implies that
$\omega_p$ is nonzero on $\Lambda^pW_k$.
Restriction on primitive classes is surjective for these inclusions
\cite[Theorem 3.5]{BushekKumar2014}.
By Proposition~\ref{prop:primitive-form}, every nonzero primitive
$p$-form on $\mathfrak g$ is therefore a nonzero multiple of the
restriction of $\omega_p$ and is nonzero on $\Lambda^pW_k$.
\end{proof}
\end{samepage}

\bigskip
\noindent
Boming Jia

\noindent
Yau Mathematical Sciences Center,\\
Jingzhai 301, Tsinghua University,\\
Beijing 100084, China

\noindent
Email \href{mailto:jiabm@tsinghua.edu.cn}{jiabm@tsinghua.edu.cn}

\end{document}